\documentclass[11pt]{article}


\usepackage{amssymb,amsmath, amsthm}
\usepackage{verbatim,mathrsfs,bbm}
\usepackage{bbold,mathbbol}
\usepackage[colorlinks,citecolor=red]{hyperref}
\usepackage{color}
\usepackage{young}
\usepackage[english]{babel}
\usepackage{tikz}

\usepackage{fullpage}

\usepackage{authblk}


	\setlength{\topmargin}{.2in}

	\setlength{\oddsidemargin}{0in}
	\setlength{\textwidth}{6.5 in}
	\setlength{\headsep}{.1in}
	




\newtheorem{Theorem}{Theorem}
\newtheorem{Lemma}[Theorem]{Lemma}
\newtheorem{Example}[Theorem]{Example}
\newtheorem{Remark}[Theorem]{Remark}

\newtheorem{Corollary}[Theorem]{Corollary}
\newtheorem{Proposition}[Theorem]{Proposition}


\newcommand{\N}{\mathbb{N}}

\newcommand{\mf}[1]{\mathfrak{#1}} 
\newcommand{\mb}[1]{\mathbb{#1}} 

\newcommand{\mt}[1]{\text{#1}}


\usepackage[all]{xy}


\begin{document}
\title{Lexicographic shellability of the Bruhat-Chevalley order on fixed-point-free involutions}

\author[1]{Mahir Bilen Can}
\author[2]{Yonah Cherniavsky}
\author[3]{Tim Twelbeck}
\affil[1]{mahir.can@yale.edu, Tulane and Yale Universities, USA}
\affil[2]{yonahch@ariel.ac.il, Ariel University, Israel}
\affil[3]{ttwelbec@tulane.edu, Tulane University, New Orleans, USA}

\date{\today}
\maketitle

\begin{abstract}
The main purpose of this paper is to prove that the Bruhat-Chevalley ordering 
of the symmetric group when restricted to the fixed-point-free involutions forms
an $EL$-shellable poset whose order complex triangulates a ball.
Another purpose of this article is to prove that the Deodhar-Srinivasan poset
is a proper, graded subposet of the Bruhat-Chevalley poset on fixed-point-free involutions.
\end{abstract}

{\em Keywords: Perfect matchings, symmetric and skew-symmetric matrices, Bruhat-Chevalley ordering, lexicographic shellability,
triangulations of balls and spheres.}

\vspace{.5cm}

{\em MSC-2010: 06A11, 14L30, 05E18}

\section{Introduction and preliminaries}

In this manuscript we are concerned with the interaction between two well known subgroups of the special linear group $\mt{SL}_{2n}$,
namely a Borel subgroup and a symplectic subgroup.
Without loss of generality, we choose the Borel subgroup $B$ to be the group of invertible upper triangular matrices,
and define the {\em symplectic group,} $\mt{Sp}_{2n}$ as the subgroup of fixed elements of the involutory automorphism
$\theta : \mt{SL}_{2n} \rightarrow \mt{SL}_{2n}$,
$\theta (g ) = J (g^{-1})^\top J^{-1}$,
where $J$ denotes the skew form
\begin{align*}
J =
\begin{pmatrix}
0 & id_n \\
-id_n & 0
\end{pmatrix},
\end{align*}
and $id_n$ is the $n\times n$ identity matrix.

It is clear that $B$ acts by left-multiplication on the symmetric space $\mt{SL}_{2n}/\mt{Sp}_{2n}$.
We investigate the covering relations of the poset $F_{2n}$ of inclusion relations among
the $B$-orbit closures.
It is known since the works of Beilinson-Bernstein \cite{BeilinsonBernstein81} and Vogan \cite{Vogan83}
that such inclusion posets have importance in the study of
discrete series representations of the real forms of semi-simple Lie groups.

To further motivate our discussion and help the reader to place our work appropriately
we look at a related situation.

It is well known that the symmetric group of permutation matrices, $S_m$ parametrizes the orbits of
the Borel group of upper triangular matrices $B\subset \mt{SL}_m$ in the flag variety $\mt{SL}_m/B$.
For $u\in S_m$, let $\dot{u}$ denote the right coset in $\mt{SL}_m/B$ represented by $u$.
The classical {\em Bruhat-Chevalley ordering} is defined by
$u \leq_{S_m} v  \iff B\cdot \dot{u} \subseteq \overline{B \cdot \dot{v}}$ for $u,v\in S_m$.

A permutation $u\in S_m$ is said to be an {\em involution}, if $u^2 = id$, or equivalently,
its permutation matrix is a symmetric matrix. We denote by $I_m$ the set of all involutions in $S_m$,
and consider it as a subposet of the Bruhat-Chevalley poset $(S_m,\leq_{S_m})$.
Let $m$ be an even number, $m=2n$.
An involution $x\in I_{2n}$ is called {\em fixed-point-free}, if the matrix of $x$ has no non-zero diagonal entries.
In [\cite{RS90}, Example 10.4], Richardson and Springer show that
there exists a poset isomorphism between the dual of $F_{2n}$ and the subposet of fixed-point-free involutions in $I_{2n}$.
Unfortunately, $F_{2n}$ does not form an interval in $I_{2n}$, hence it does not immediately inherit nice properties therein.
In fact, this is easily seen for $n=2$ from the Hasse diagram of $I_4$ in Figure \ref{fig:F4 in I4}, in which
the fixed point free involutions are boxed.

\begin{figure}[htp]
\begin{center}
\begin{tikzpicture}[scale=.4]

\node at (0,0) (c2) {$id$};

\node at (-7.5,5) (d1) {$(34)$};
\node at (0,5) (d2) {$(23)$};
\node at (7.5,5) (d3) {$(12)$};

\node at (-7.5,10) (e1) {$(24)$};
\node at (7.5,10) (e2) {$(13)$};
\node at (0,10) (e3) {$\framebox{(12)(34)}$};

\node at (-5,15) (f1) {$(14)$};
\node at (5,15) (f2) {$\framebox{(13)(24)}$};
\node at (0,20) (g) {$\framebox{(14)(23)}$};

\draw[-, very thick] (c2) to (d1);
\draw[-, very thick] (c2) to (d2);
\draw[-, very thick] (c2) to (d3);

\draw[-, very thick] (d1) to (e1);
\draw[-, very thick] (d1) to (e3);
\draw[-, very thick] (d2) to (e1);
\draw[-, very thick] (d2) to (e1);
\draw[-, very thick] (d2) to (e2);
\draw[-, very thick] (d2) to (e2);
\draw[-, very thick] (d3) to (e2);
\draw[-, very thick] (d3) to (e3);

\draw[-, very thick] (e1) to (f1);
\draw[-, very thick] (e1) to (f2);

\draw[-, very thick] (e3) to (f1);
\draw[-, very thick] (e2) to (f1);
\draw[-, very thick] (e2) to (f2);
\draw[-, very thick,color=blue] (e3) to (f2);

\draw[-, very thick] (f1) to (g);
\draw[-, very thick,color=blue] (f2) to (g);

\end{tikzpicture}
\end{center}
\label{fig:F4 in I4}
\caption{$F_4$ in $I_4$}
\end{figure}
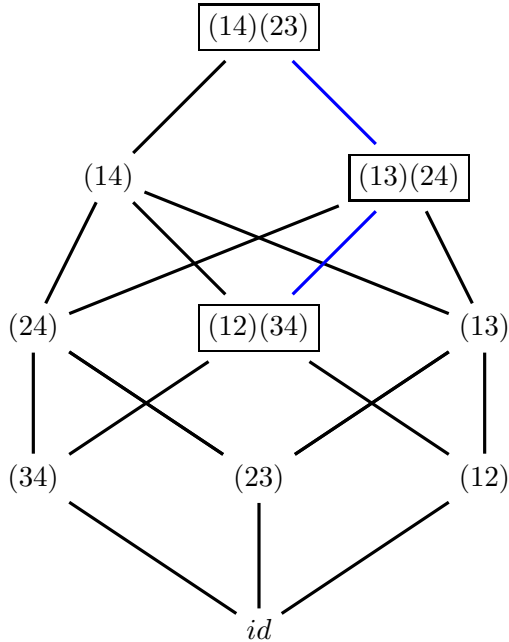


Let $\leq$ denote the restriction of the Bruhat-Chevalley ordering on $F_{2n}$.
Our first main result is that $(F_{2n},\leq)$ is ``$EL$-shellable,''
which is a property that is well known to be true for many other related posets.
See~\cite{Can12} (building on \cite{CR11},\cite{CanTwelbeck},\cite{Edelman81},\cite{Incitti04}, and~\cite{Proctor82}.

Recall that a finite graded poset $P$ with a maximum and a minimum element is called {\em $EL$-shellable},
if there exists a map $f=f_{\varGamma}: C(P) \rightarrow \varGamma$ from the
set of covering relations $C(P)$ of $P$ into a totally ordered set $\varGamma$ satisfying
\begin{enumerate}
\item in every interval $[x,y] \subseteq P$ of length $k>0$ there exists a unique saturated chain
$\mathfrak{c}:\ x_0=x < x_1 < \cdots < x_{k-1} < x_k=y$ such that the entries of the sequence
\begin{align}\label{JordanHolder}
f(\mathfrak{c}) = (f(x_0,x_1), f(x_1,x_2), \dots , f(x_{k-1},x_k))
\end{align}
are weakly increasing.

\item The sequence $f(\mathfrak{c})$ of the unique chain
$\mathfrak{c}$ from (1) is the lexicographically smallest among all sequences of the form
$(f(x_0,x_1'), f(x_1',x_2'), \dots , f(x_{k-1}',x_k))$, where $x_0 < x_1' < \cdots < x_{k-1}' < x_k$.

\end{enumerate}

For us, the {\em order complex} of a poset $P$ is the abstract simplicial complex $\Delta(P)$
whose simplicies are the chains in $\overline{P}=P - \{ \hat{0},\hat{1} \}$ (if the smallest 
element $\hat{0}$, and the largest element $\hat{1}$ are present in $P$). 
For an $EL$-shellable poset the order complex is shellable,
in particular it implies that $\Delta(P)$ is Cohen-Macaulay~\cite{Bjorner80}.
These, of course, are among the most desirable properties of a topological space.

As a corollary of our construction of the $EL$-labeling of $F_{2n}$, we prove a special case of a conjecture of
A. Hultman that the order complex of (the proper part of) $F_{2n}$ triangulates a ball of dimension $n^2-n-2$.
See Conjecture 6.3, \cite{H}. See \cite{Hultman05}, also.

Before we continue with explaining our other results, let us mention an important development 
which can be seen as a sequel to our work.
Let $n$ be an arbitrary positive integer.
Recall that the set of all $n\times n$ skew-symmetric matrices form the {\em special orthogonal Lie algebra}, $\mf{so}_{n}$.
The {\em congruence action} of $\mt{SL}_{n}$ on this Lie algebra is defined by $g\cdot A = (g^{-1})^\top A g^{-1}$.
In his Ph.D. thesis, the third author, by extending our labeling, proves that the inclusion poset of closures of the
Borel orbits in $\mf{so}_n$ (via the congruence action) is an $EL$-shellable poset.
For details, see Twelbeck's dissertation \cite{Twelbeck2013}.

In the literature there are different versions of lexicographic shellability.
A closely related notion with the same topological consequences as $EL$-shellability is called ``$CL$-shellability.''
We do not introduce its definition here, for more
we recommend the excellent monograph \cite{Wachs07} of Wachs who is one of the inventors of this notion.
It is known that $EL$-shellability implies $CL$-shellability,
however, whether the converse is true is an open problem for about thirty years.
In \cite{BW82}, Bj\"orner and Wachs show that Bruhat order on
all Coxeter groups, as well as on all sets of minimal-length coset representatives (quotients)
in Coxeter groups are ``dual $CL$-shellable.''
A decade after the introduction of $CL$-shellability, in \cite{Dyer93}, M. Dyer shows
that Bruhat order on all Coxeter groups and all quotients are $EL$-shellable.
Using Dyer's methods, in her 2007 Crelle paper \cite{Williams07}, L. Williams shows
that the poset of cells of a cell decomposition of the totally non-negative part of a flag variety
is $EL$-shellable. 

There are various directions that the results of \cite{BW82} are extended. For semigroups,
in \cite{Putcha02}, Putcha shows that ``$J$-classes in Renner monoids'' are $CL$-shellable.
In another direction, in Theorem~6.4 of~\cite{RV}, it is claimed that the Bruhat order 
on ``quasiparabolic'' sets in Coxeter groups is $CL$-shellable.
In particular, the fixed-point-free involutions form a quasiparabolic set in $S_{2n}$. 
However, it is pointed out to us by the referee of our paper and confirmed by one of the 
authors of~\cite{RV} that the proof of lexicographic shellability in Theorem~6.4 of~\cite{RV} 
seems to have a flaw and it is not obvious that it can be fixed. 
Most of the main results of~\cite{RV} are not affected by this.

One of the reasons the shellability of $F_{2n}$ is not considered before is that
there is a closely related $EL$-shellable partial order studied by Deodhar and Srinivasan in \cite{DeodharSrinivasan},
which was thought by several authors to be the same as Bruhat order on $F_{2n}$: 
for example, see page 248 (at the end of Section 2) in~\cite{Incitti04}, 
see also~\cite{Bona}, exercise 15 on page 307 and the corresponding note on page 312. 
As noticed in \cite{H} (without proofs) Deodhar-Srinivasan's poset differs from the Bruhat-Chevalley ordering on $F_{2n}$.
Here in our paper, we analyze the difference between these posets in more detail.
Let us mention also that, as it turns out, Deodhar and Srinivasan's poset is a particular case of a combinatorial
construction presented in~\cite{CanCherniavsky13}. See also~\cite{UV13}.

Obviously, every $x\in F_{2n}$ is expressible as a product of transpositions.
Indeed, let $i_1, \dots, i_n$ denote the list of all numbers from $\{1,\dots, 2n\}$ 
such that $x(i_r) > i_r$ for $r=1,\dots,n$.
Then $x = (i_1,x(i_1))(i_2,x(i_2))\cdots (i_n,x(i_n))$.
Note that disjoint cycles (hence transpositions) commute, therefore, 
to insist on the uniqueness of the expression, we require
that $i_1 < \dots < i_n$. In this case, by a change of notation, in place of $x$
we write $[i_1,x(i_1)][ i_2,x(i_2)]\cdots [i_n,x(i_n)]$.
Let $\tilde{F}_{2n}$ denote the set of all such unique ordered expressions, one for each $x\in F_{2n}$.

The partial ordering of \cite{DeodharSrinivasan}, which we call the {\em Deodhar-Srinivasan partial ordering}
and denote by $\leq_{DS}$, is defined as the transitive closure of the following relations.

$y=[c_1,d_1]\dots [c_n,d_n]\in \tilde{F}_{2n}$ is said to be greater than $x=[a_1,b_1]\cdots [a_n,b_n]\in \tilde{F}_{2n}$
in $\leq_{DS}$, if there exist $1 \leq i<j \leq n$ such that
\begin{enumerate}
\item $y$ is obtained from $x$ by interchanging $b_i$ and $a_j$, or
\item $y$ is obtained from $x$ by interchanging $b_i$ and $b_j$.
\end{enumerate}

A careful inspection of the Hasse diagrams of $(F_{2n},\leq)$ and $(\tilde{F}_{2n},\leq_{DS})$ 
reveals that these two posets are ``almost'' the same but different.
Our second main result is that the rank functions of these posets are the same, 
and furthermore, the latter is a graded subposet of the former.

The organization of our manuscript is as follows. 
In Section \ref{S:EL labeling}, we recall some known facts and study covering relations of $F_{2n}$.
In Section \ref{S:Main Theorem} we prove our first main result, 
and in Section \ref{S:comparison} we prove our second main result.
In Section \ref{S:ordercomplex} we show that $\Delta(F_{2n})$ triangulates 
a ball of dimension $n^2-n-2$.
We conclude our paper in Section \ref{son} with a short discussion of the various 
equivalent characterizations of the length function of $\ell_{F_{2n}}$.



Let $m$ be a positive integer. We denote the set $\{1,\dots,m\}$ by $[m]$.
In this paper, all posets are assumed to be finite and assumed to have
a minimal and a maximal element, denoted by $\hat{0}$ and $\hat{1}$, respectively.
Recall that in a poset $P$, an element $y$ is said to {\em cover} another element $x$, if
$x < y$ and if $x \leq z \leq y$ for some $z\in P$, then either $z=x$ or $z=y$.
In this case, we write $x \leftarrow y$.
Given $P$, we denote by $C(P)$ the set of all covering relations of $P$.

An (increasing) {\em chain} in $P$ is a sequence of distinct elements such that
$x=x_1 < x_2 < \cdots < x_{n-1} < x_n = y$.
A chain in a poset $P$ is called {\em saturated}, if it is of the form
$x=x_1 \leftarrow x_2 \leftarrow \cdots \leftarrow x_{n-1} \leftarrow x_n = y$.
A saturated chain in an interval $[x,y]$ is called {\em maximal}, if the end points of the chain are $x$ and $y$.
Recall also that a poset is called {\em graded} if all maximal chains between any two comparable elements $x \leq y$
have the same length.
This amounts to the existence of an integer valued function $\ell_P : P\rightarrow \N$ satisfying
\begin{enumerate}
\item $\ell_P (\hat{0}) = 0$,
\item $\ell_P (y) = \ell_P(x) +1$ whenever $y$ covers $x$ in $P$.
\end{enumerate}
$\ell_P$ is called the {\em length function} of $P$.
In this case, the length of $\hat{1}$ is called the {\em length} of the poset $P$.

\vspace{.5cm} 
\noindent 
\textbf{Acknowledgements.}
We express a deep gratitude to the referee for his very careful reading of our paper, 
his comments and suggestions which helped us very much to improve the paper, 
especially for pointing out that the lexicographic shellability result of our paper
was not established earlier.
The first and the third authors are partially supported by a 
Louisiana Board of Regents Research and Development Grant.
Authors would like to thank Michael Joyce, Roger Howe, Axel Hultman, 
Lex Renner, Yuval Roichman, and Bruce Sagan.

\section{$EL$-labeling}
\label{S:EL labeling}

\subsection{Incitti's $EL$-labeling}

For a permutation $\sigma \in S_n$, a {\em rise} of $\sigma$ is a pair of indices $1\leq i_1,i_2\leq n$ such that
$i_1 < i_2\  \text{and}\ \sigma(i_1)<\sigma(i_2)$.
A rise $(i_1,i_2)$ is called {\em free}, if there is no $k \in [n]$ such that
$i_1<k<i_2\ \text{and}\ \sigma(i_1)<\sigma(k)<\sigma(i_2)$. 
For $\sigma \in S_n$, define its {\em fixed point set, its exceedance set} and its {\em defect set} to be
\begin{align*}
I_f(\sigma) &=Fix(\sigma)=\{i \in [n]:\sigma(i)=i\}, \\
I_e(\sigma) &=Exc(\sigma)=\{i \in [n]:\sigma(i)>i\}, \\
I_d(\sigma) &=Def(\sigma)=\{i \in [n]:\sigma(i)<i\},
\end{align*}
respectively.
Given a rise $(i_1,i_2)$ of $\sigma$, its {\em type} is defined to be the pair $(a,b)$, if
$i_1 \in I_a(\sigma)$ and $i_2 \in I_b(\sigma)$, for some $a,b \in \{f,e,d\}$.
We call a rise of type $(a,b)$ an {\em $ab$-rise}.
On the other hand, two kinds of $ee$-rises have to be distinguished from each other;
an $ee$-rise is called {\em crossing}, if
$i_1<\sigma(i_1)<i_2<\sigma(i_2)$, and it is called {\em non-crossing}, if $i_1<i_2<\sigma(i_1)<\sigma(i_2)$.
The rise $(i_1,i_2)$ of an involution $\sigma \in I_n$ is called {\em suitable} 
if it is free and if its type is one of the following: $(f,f), (f,e), (e,f), (e,e), (e,d).$
A {\em covering transformation}, denoted $ct_{(i_1,i_2)}(\sigma)$, of a suitable rise $(i_1,i_2)$ of $\sigma$ is
the involution obtained from $\sigma$ by moving the 1's from the black dots to the white dots as depicted
in Table 1 of~\cite{Incitti04}.

Let $\leq$ denote the Bruhat-Chevalley ordering on the set of involutions of $S_{n}$. 
It is shown in~\cite{Incitti04} that if $\tau$ and $\sigma$ are from $I_n$, then
\begin{align*}
\tau \ \text{covers}\ \sigma\ \text{in}\ \leq\  \iff \ \tau = ct_{(i_1,i_2)}(\sigma),\ 
\text{for some suitable rise}\ (i_1,i_2)\ \text{of}\ \sigma.
\end{align*}
Let $\varGamma$ denote the totally ordered set $[n]\times [n]$ with respect to lexicographic ordering.
In the same paper, Incitti shows that the labeling defined by 
$f_\varGamma ((\sigma, ct_{(i_1,i_2)}(\sigma))) := (i_1,i_2) \in \varGamma$ 
is an $EL$-labeling, hence, $(I_n, \leq)$ is a lexicographically shellable poset.

\subsection{Covering transformations in $F_{2n}$}

Let $x$ and $y$ be two fixed-point-free involutions.
It follows from~\cite{H} (Theorem~4.6 and Example~3.4) 
that there exists a saturated chain between $x$ and $y$ that 
is entirely contained in $F_{2n}$. Notice that this fact can also be 
easily seen using the formulas for the length function of $F_{2n}$ 
and $I_{2n}$ presented in~\cite{Cherniavsky11} and~\cite{BC12}.
Therefore, its covering relations are among the covering relations of $I_{2n}$.
On the other hand, within $F_{2n}$ we use two types of covering transformations, only.
For convenience of the reader, we depict these moves in Figure \ref{Fs1} and Figure \ref{Fs2}.
These moves correspond to the items numbered 4 and 6 in Table 1 of~\cite{Incitti04}.

\begin{figure}[htp]
\begin{center}
\begin{tikzpicture}[scale=.75]

\tikzset{mymy/.style={circle,draw=blue, very thick, inner sep=.12em, fill=white }}
\tikzset{mymy0/.style={circle,draw=blue,very thick, fill=black, inner sep=.02em, minimum size=.4em}}
\node at (0,0) {$\longleftarrow$};

\begin{scope}[xshift=-4.25cm,yshift=0cm]
\node at (0,-3.75) {$\sigma$};
\draw[dotted, thick,black] (-2.5,2.5) -- (2.5,2.5) -- (2.5,-2.5) -- (-2.5,-2.5) -- (-2.5,2.5);
\draw[dotted, thick, black] (-2.5,2.5) -- (2.5,-2.5);
\filldraw[dotted,fill=gray!45,opacity=.95] (.5,.5) -- (1.5,.5) -- (1.5,1.5) -- (.5,1.5) -- (.5,.5) ;
\filldraw[dotted,fill=gray!45,opacity=.95] (-.5,-.5) -- (-1.5,-.5) -- (-1.5,-1.5) -- (-.5,-1.5) -- (-.5,-.5) ;

\node at (.5,.5) {0};
\node at (1.5,.5) {1};
\node at (1.5,1.5) {0};
\node at (.5,1.5) {1};
\node at (-.5,-.5) {0};
\node at (-1.5,-.5) {1};
\node at (-1.5,-1.5) {0};
\node at (-.5,-1.5) {1};
\node at (-2.75,1.5) {$i$};
\node at (-2.75,.5) {$j$};
\node at (-3,-.5) {$\sigma(i)$};
\node at (-3,-1.5) {$\sigma(j)$};
\node at (-1.5,2.85) {$i$};
\node at (-.5,2.85) {$j$};
\node at (.5,2.85) {$\sigma(i)$};
\node at (1.75,2.85) {$\sigma(j)$};
\end{scope}
\begin{scope}[xshift=4.5cm,yshift=0cm]
\node at (0,-3.75) {$\tau$};
\draw[dotted, thick,black] (-2.5,2.5) -- (2.5,2.5) -- (2.5,-2.5) -- (-2.5,-2.5) -- (-2.5,2.5);
\draw[dotted, thick, black] (-2.5,2.5) -- (2.5,-2.5);
\filldraw[dotted,fill=gray!45,opacity=.95] (.5,.5) -- (1.5,.5) -- (1.5,1.5) -- (.5,1.5) -- (.5,.5) ;
\filldraw[dotted,fill=gray!45,opacity=.95] (-.5,-.5) -- (-1.5,-.5) -- (-1.5,-1.5) -- (-.5,-1.5) -- (-.5,-.5) ;
\node at (.5,.5) {1};
\node at (1.5,.5) {0};
\node at (1.5,1.5) {1};
\node at (.5,1.5) {0};
\node at (-.5,-.5) {1};
\node at (-1.5,-.5) {0};
\node at (-1.5,-1.5) {1};
\node at (-.5,-1.5) {0};
\node at (-2.75,1.5) {$i$};
\node at (-2.75,.5) {$j$};
\node at (-3,-.5) {$\sigma(i)$};
\node at (-3,-1.5) {$\sigma(j)$};
\node at (-1.5,2.85) {$i$};
\node at (-.5,2.85) {$j$};
\node at (.5,2.85) {$\sigma(i)$};
\node at (1.75,2.85) {$\sigma(j)$};
\end{scope}
\end{tikzpicture}
\end{center}
\caption{(non-crossing) $ee$-rise for the covering $\tau \rightarrow \sigma$.}
\label{Fs1}
\end{figure}
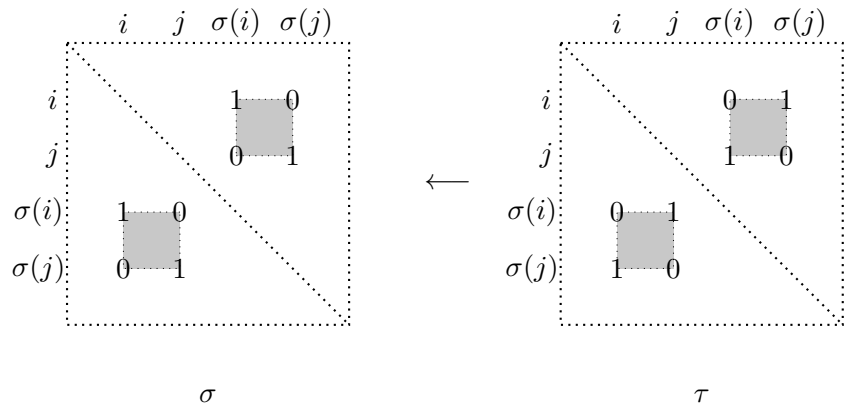

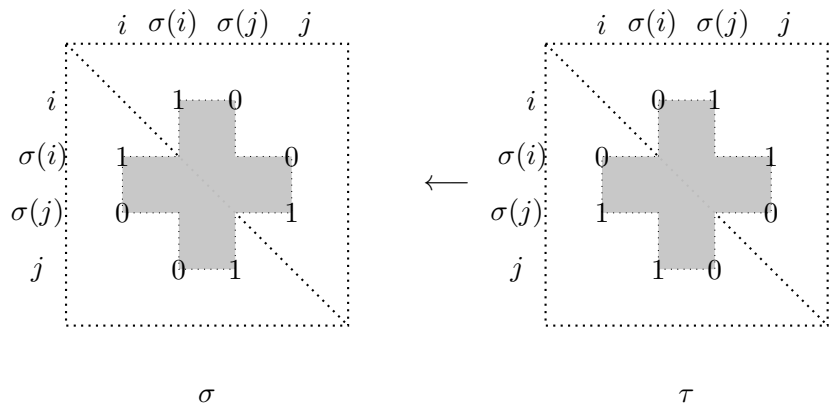
\begin{figure}[htp]
\begin{center}
\begin{tikzpicture}[scale=.75]
\tikzset{mymy/.style={circle,draw=blue, very thick, inner sep=.12em, fill=white }}
\tikzset{mymy0/.style={circle,draw=blue,very thick, fill=black, inner sep=.02em, minimum size=.4em}}
\node at (0,0) {$\longleftarrow$};
\begin{scope}[xshift=-4.25cm,yshift=0cm]
\node at (0,-3.75) {$\sigma$};
\draw[dotted, thick,black] (-2.5,2.5) -- (2.5,2.5) -- (2.5,-2.5) -- (-2.5,-2.5) -- (-2.5,2.5);
\draw[dotted, thick, black] (-2.5,2.5) -- (2.5,-2.5);
\filldraw[dotted,fill=gray!45,opacity=.95] (-.5,.5) -- (-.5,1.5) -- (.5,1.5) -- (.5,.5) -- (1.5,.5) -- (1.5, -.5)  -- (.5,-.5)
-- (.5,-1.5) -- (-.5,-1.5) -- (-.5, -.5) -- (-1.5,-.5) -- (-1.5, .5) -- (-.5,.5);
\node at (-.5,1.5) {1};
\node at (.5,1.5) {0};
\node at (1.5,.5) {0};
\node at (1.5,-.5) {1};
\node at (.5,-1.5) {1};
\node at (-.5,-1.5) {0};
\node at (-1.5,-.5) {0};
\node at (-1.5,.5) {1};
\node at (-2.75,1.5) {$i$};
\node at (-2.9,.5) {$\sigma(i)$};
\node at (-3,-.5) {$\sigma(j)$};
\node at (-3,-1.5) {$j$};
\node at (-1.5,2.85) {$i$};
\node at (-.6,2.85) {$\sigma(i)$};
\node at (.65,2.85) {$\sigma(j)$};
\node at (1.75,2.85) {$j$};
\end{scope}

\begin{scope}[xshift=4.25cm,yshift=0cm]
\node at (0,-3.75) {$\tau$};
\draw[dotted, thick,black] (-2.5,2.5) -- (2.5,2.5) -- (2.5,-2.5) -- (-2.5,-2.5) -- (-2.5,2.5);
\draw[dotted, thick, black] (-2.5,2.5) -- (2.5,-2.5);
\filldraw[dotted,fill=gray!45,opacity=.95] (-.5,.5) -- (-.5,1.5) -- (.5,1.5) -- (.5,.5) -- (1.5,.5) -- (1.5, -.5)  -- (.5,-.5)
-- (.5,-1.5) -- (-.5,-1.5) -- (-.5, -.5) -- (-1.5,-.5) -- (-1.5, .5) -- (-.5,.5);
\node at (-.5,1.5) {0};
\node at (.5,1.5) {1};
\node at (1.5,.5) {1};
\node at (1.5,-.5) {0};
\node at (.5,-1.5) {0};
\node at (-.5,-1.5) {1};
\node at (-1.5,-.5) {1};
\node at (-1.5,.5) {0};
\node at (-2.75,1.5) {$i$};
\node at (-2.9,.5) {$\sigma(i)$};
\node at (-3,-.5) {$\sigma(j)$};
\node at (-3,-1.5) {$j$};
\node at (-1.5,2.85) {$i$};
\node at (-.6,2.85) {$\sigma(i)$};
\node at (.65,2.85) {$\sigma(j)$};
\node at (1.75,2.85) {$j$};
\end{scope}
\end{tikzpicture}
\end{center}
\caption{$ed$-rise for the covering $\tau \rightarrow \sigma$.}
\label{Fs2}
\end{figure}

\section{Main Theorem}
\label{S:Main Theorem}

\begin{Theorem}\label{MainTheorem}
$F_{2n}$ is an $EL$-shellable poset.
\end{Theorem}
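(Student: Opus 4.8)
The plan is to transport Incitti's edge-labeling of $(I_{2n},\le)$ to $F_{2n}$, but equipped with a carefully chosen total order on the label set rather than the naive lexicographic one. By the discussion preceding the theorem, every covering relation $\sigma\lessdot\tau$ of $F_{2n}$ is also a covering relation of $I_{2n}$, realized by a non-crossing $ee$-rise or an $ed$-rise $(i_1,i_2)$ (items $4$ and $6$ of Incitti's table), so the rule $\lambda(\sigma,\tau)=(i_1,i_2)$ defines a labeling $\lambda\colon C(F_{2n})\to[2n]\times[2n]$. Describing a cover by the four indices $a<b<c<d$ that it involves, I record that an $ed$-rise sends a disjoint pair of arcs $(a,b),(c,d)$ to the crossing pair $(a,c),(b,d)$ and carries the label $(a,d)$, whereas a non-crossing $ee$-rise sends a crossing pair $(a,c),(b,d)$ to the nested pair $(a,d),(b,c)$ and carries the label $(a,b)$. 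Because covering relations of $F_{2n}$ are covering relations of $I_{2n}$, every maximal chain of an interval $[x,y]$ in $F_{2n}$ is in particular a maximal chain of the corresponding interval in the graded poset $I_{2n}$.

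One cannot simply inherit $EL$-shellability from $I_{2n}$, since the unique increasing maximal chain of $[x,y]$ in $I_{2n}$ may leave $F_{2n}$. Already for $F_4=[j_4,w_0]$ the only chain lying inside $F_4$ is $(12)(34)\lessdot(13)(24)\lessdot(14)(23)$, with Incitti labels $(1,4),(1,2)$, which is \emph{decreasing} in lexicographic order; meanwhile the lexicographically increasing chain of $[j_4,w_0]$ in $I_4$ passes through $(14)\notin F_4$. This forces the choice of order. I would equip $\Gamma=[2n]\times[2n]$ with the order $(p,q)\prec(p',q')$ if $p<p'$, or $p=p'$ and $q>q'$ (lexicographic in the first coordinate, reversed in the second), under which the $F_4$ chain becomes increasing. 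The content of the theorem is then that $\lambda$ together with $\prec$ is an $EL$-labeling of $F_{2n}$, a statement to be proved intrinsically rather than cited from Incitti.

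The verification I would carry out proceeds by induction on the length $\ell(y)-\ell(x)$ of an interval, the essential input being the analysis of rank-two intervals. Each such $[x,y]$ in $F_{2n}$ is either a single chain $x\lessdot z\lessdot y$ or has several middle elements; in every case I must show it contains a unique $\prec$-increasing maximal chain which is also $\prec$-lexicographically least. I would classify these intervals by how the two covering moves overlap: moves supported on disjoint four-element sets commute and produce a square with two distinct labels, for which exactly one ordering is increasing and is automatically least; the remaining cases are those in which the two moves share arcs, and for each combination of $ed$- and $ee$-moves sharing two, three, or four of their points I would tabulate the middle elements together with their pairs of labels and check the unique-increasing and least-label properties against $\prec$. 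For the inductive step I would use the standard sorting argument: any descent of the label sequence of a maximal chain occurs inside a rank-two subinterval, and replacing the offending subchain by the increasing chain of that subinterval strictly decreases the number of descents, so iteration yields an increasing chain; uniqueness and lexicographic minimality then follow from the rank-two analysis combined with the inductive hypothesis applied to the upper interval.

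The main obstacle is precisely this rank-two casework together with its confluence. I must guarantee that the single order $\prec$ makes the unique-increasing-chain and least-label conditions hold simultaneously in every overlap configuration of the two moves, and that the local reorderings are confluent so that the sorting terminates at one and the same chain; the $F_4$ computation already shows that a wrong choice of order fails outright. The lexicographic-minimality clause, condition~(2) of $EL$-shellability, demands the same bookkeeping one level finer, since the distinguished chain must be compared not only with the other increasing chains but with \emph{all} maximal chains of the interval. I expect the difficulty to lie in this uniform, case-by-case control of the two arc moves rather than in any single conceptual step.
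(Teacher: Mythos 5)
Your overall strategy---restrict Incitti's labeling of $I_{2n}$ to the two types of covers that survive in $F_{2n}$, then replace the lexicographic order on the label set $[2n]\times[2n]$ by a different total order---is exactly the paper's, and your observation that the naive lex order already fails on $F_4$ is correct. The gap is in the specific order you chose: $(p,q)\prec(p',q')$ iff $p<p'$, or $p=p'$ and $q>q'$. The $F_4$ computation only forces the reversal of the second coordinate within a fixed first coordinate; it says nothing about how labels with distinct first coordinates should compare, and your choice fails already for $F_6$. Labeling the Hasse diagram of Figure~\ref{fig:SL6/Sp6} by Incitti's rule, the interval from $\hat{0}=(12)(34)(56)$ to $\hat{1}=(16)(25)(34)$ contains \emph{no} maximal chain that is increasing for $\prec$: a chain beginning with the cover labeled $(3,6)$ can only continue with $(3,4)$ and then dies, since both labels out of $(12)(36)(45)$ have first coordinate $1<3$; a chain beginning with $(1,4)$ has exactly two viable continuations, with label sequences $(1,4),(1,2),(2,6),(2,3)$ and $(1,4),(2,6),(2,4)$, and in both cases every label available at the next step has first coordinate $1<2$. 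So condition~(1) of $EL$-shellability fails outright for $\prec$, and the rank-two casework you propose could not be completed.

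The order that works, and the one the paper uses, is the \emph{full} reversal of the lexicographic order, so that an increasing chain is precisely an Incitti-decreasing chain and the lexicographically least chain is the Incitti-lex-greatest one. This choice makes uniqueness free: every saturated chain of $F_{2n}$ is saturated in $I_{2n}$, and Incitti's $EL$-labeling of $I_{2n}$ admits exactly one decreasing chain per interval, so at most one such chain can lie in $F_{2n}$. The only substantive point is then \emph{existence}, and the paper establishes it by taking the lexicographically greatest maximal chain of $[x,y]$ inside $F_{2n}$ and showing it is decreasing, via a case analysis of two consecutive covers ($ee/ee$, $ed/ed$, $ee/ed$, $ed/ee$) in which an ascent either lets the two moves be interchanged or produces a cover of the bottom element with a lexicographically larger label; condition~(2) comes along for free. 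If you keep the labeling but switch to reverse-lex, your rank-two analysis should be redirected to exactly this local exchange statement rather than to an independent verification of both $EL$ conditions.
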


\begin{proof}

Let $x$ and $y$ be two fixed-point-free involutions from $F_{2n}$. 
Then there exists a saturated chain between $x$ and $y$ that is entirely contained in $F_{2n}$. 
Since lexicographic ordering is a total order on maximal chains, 
there exists a unique largest such chain. We denote it by
$
\mf{c}: x = x_1 < x_2 < \cdots < x_s = y.
$
The idea of the proof is showing that $\mf{c}$ is the unique decreasing chain and therefore
by switching the order of our totally ordered set $\mb{Z}^2$ obtaining the lexicographically 
smallest chain which is the unique increasing chain. See Figure \ref{fig:n=6} 
on page \pageref{fig:n=6} for an illustration.

\begin{figure}[htp]
\begin{center}
\begin{tikzpicture}[scale=.5]

\node at (0,0) (a) {$(12)(34)(56)$};

\node at (-5,5) (b1) {$(13)(24)(56)$};
\node at (5,5) (b2) {$(12)(35)(46)$};

\node at (-10,10) (c1) {$(14)(23)(56)$};
\node at (0,10) (c2) {$(13)(25)(46)$};
\node at (10,10) (c3) {$(12)(36)(45)$};

\node at (-10,15) (d1) {$(15)(23)(46)$};
\node at (0,15) (d2) {$(14)(25)(36)$};
\node at (10,15) (d3) {$(13)(26)(45)$};

\node at (-10,20) (e1) {$(15)(24)(36)$};
\node at (10,20) (e2) {$(14)(26)(35)$};
\node at (0,20) (e3) {$(16)(23)(45)$};

\node at (-5,25) (f1) {$(15)(26)(34)$};
\node at (5,25) (f2) {$(16)(24)(35)$};

\node at (0,30) (g) {$(16)(25)(34)$};

\node[color=blue] at (-3.5,2.5) {$(1,4)$};
\node[color=blue] at (3.5,2.5) {$(3,6)$};

\node[color=blue] at (-8.5,7.5) {$(1,2)$};
\node[color=blue] at (-3.5,7.5) {$(2,6)$};
\node[color=blue] at (3.5,7.5) {$(1,5)$};
\node[color=blue] at (8.5,7.5) {$(3,4)$};

\node[color=blue] at (-11,12.5) {$(1,6)$};
\node[color=blue] at (-7.5,13) {$(1,2)$};
\node[color=blue] at (-3.5,14) {$(2,6)$};
\node[color=blue] at (3.5,14) {$(1,5)$};
\node[color=blue] at (0,12.5) {$(1,6)$};
\node[color=blue] at (7,14) {$(2,4)$};
\node[color=blue] at (11,12.5) {$(1,6)$};

\node[color=blue] at (-11,17.5) {$(2,6)$};
\node[color=blue] at (-1.5,18.5) {$(1,4)$};
\node[color=blue] at (-8.5,18.5) {$(1,2)$};
\node[color=blue] at (8.5,18.5) {$(2,3)$};
\node[color=blue] at (1.5,18.5) {$(1,2)$};
\node[color=blue] at (11,17.5) {$(1,5)$};

\node[color=blue] at (-8,23) {$(2,3)$};
\node[color=blue] at (-5,21) {$(1,3)$};
\node[color=blue] at (-3,23.5) {$(1,3)$};
\node[color=blue] at (8.5,22.5) {$(1,2)$};
\node[color=blue] at (4.5,23.5) {$(2,5)$};

\node[color=blue] at (-3.5,27.5) {$(1,2)$};
\node[color=blue] at (3.5,27.5) {$(2,3)$};
\draw[-, very thick] (a) to (b1);
\draw[-, very thick,color=red] (a) to (b2);
\draw[-, very thick] (b1) to (c1);
\draw[-, very thick] (b1) to (c1);
\draw[-, very thick] (b1) to (c2);
\draw[-, very thick] (b2) to (c2);
\draw[-, very thick,color=red] (b2) to (c3);
\draw[-, very thick] (c1) to (d1);
\draw[-, very thick] (c1) to (d2);
\draw[-, very thick] (c2) to (d1);
\draw[-, very thick] (c2) to (d2);
\draw[-, very thick] (c2) to (d3);
\draw[-, very thick,color=red] (c3) to (d3);
\draw[-, very thick] (c3) to (d2);
\draw[-, very thick] (d1) to (e1);
\draw[-, very thick] (d1) to (e3);
\draw[-, very thick] (d2) to (e1);
\draw[-, very thick] (d2) to (e1);
\draw[-, very thick] (d2) to (e2);
\draw[-, very thick] (d2) to (e2);
\draw[-, very thick,color=red] (d3) to (e2);
\draw[-, very thick] (d3) to (e3);
\draw[-, very thick] (e1) to (f1);
\draw[-, very thick] (e1) to (f2);
\draw[-, very thick,color=red] (e2) to (f1);
\draw[-, very thick] (e2) to (f2);
\draw[-, very thick] (e3) to (f2);
\draw[-, very thick,color=red] (f1) to (g);
\draw[-, very thick] (f2) to (g);

\end{tikzpicture}
\end{center}
\label{fig:n=6}
\caption{Bruhat-Chevalley order on $\mt{SL}_6/\mt{Sp}_6$.}\label{fig:SL6/Sp6}
\end{figure}
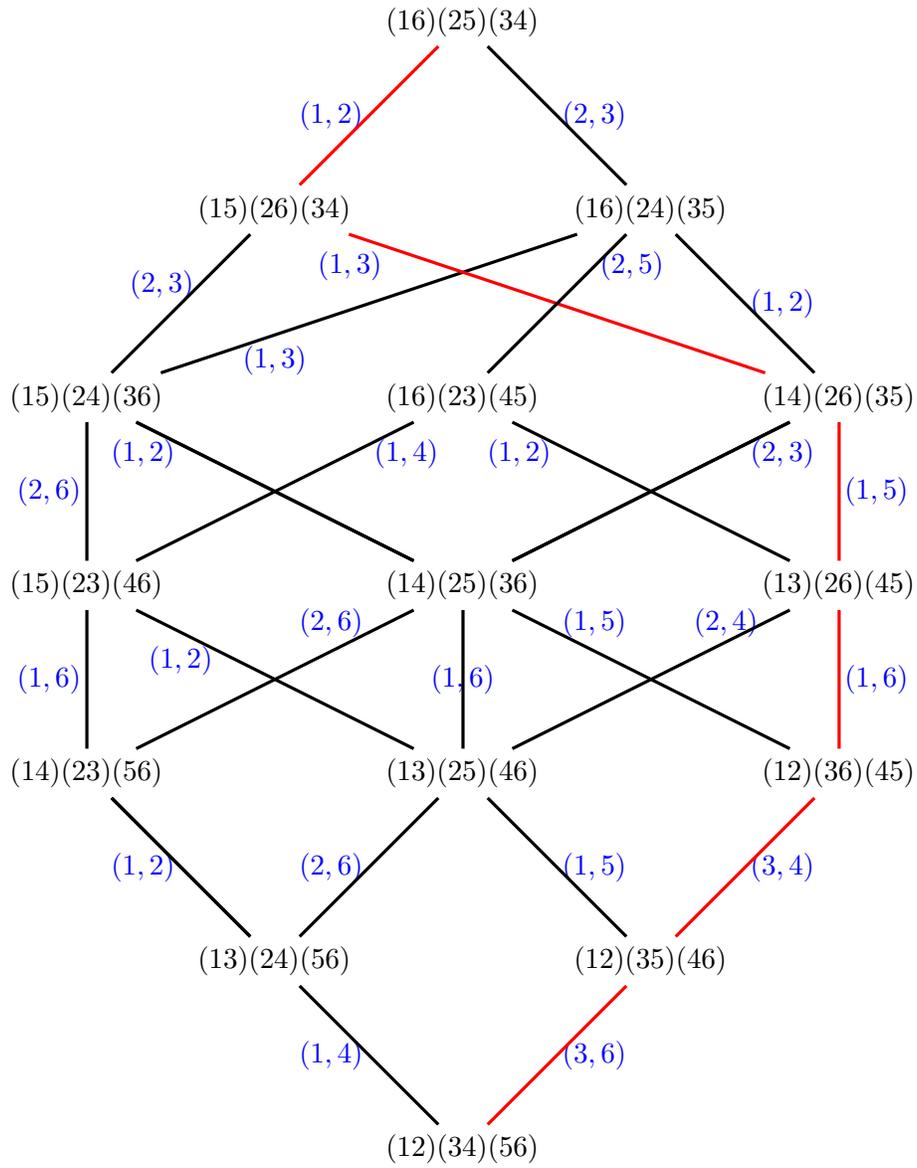

Towards a contradiction assume that $\mf{c}$ is not decreasing. 
Then, there exist three consecutive terms
\begin{equation*}
\sigma = x_{t-1} <  \tau = x_t <  \gamma = x_{t+1}
\end{equation*}
in $\mathfrak{c}$, such that $f((\sigma,\tau)) < f((\tau , \gamma))$.
We have 4 cases to consider.

        \begin{center}

        Case 1: $type(\sigma,\tau)=ee$, and $type(\tau , \gamma)=ee$.\\

        Case 2: $type(\sigma,\tau)=ed$, and $type(\tau , \gamma)=ed$.\\

        Case 3: $type(\sigma,\tau)=ee$, and $type(\tau , \gamma)=ed$.\\

        Case 4: $type(\sigma,\tau)=ed$, and $type(\tau , \gamma)=ee$.\\

        \end{center}

In each of these 4 cases, we either produce an immediate contradiction 
by showing that either the two moves are interchangeable
(hence $\mf{c}$ is not the largest chain), or we construct an element $z\in [x,y] \cap F_{2n}$ 
which covers $\sigma$, and such that $f((\sigma,z)) > f((\sigma,\tau))$. Since we assume that
$f(\mf{c})$ is the lexicographically largest, the existence of $z$ is a contradiction, too.
To this end, suppose that the label of the first move is $(i,j)$, and the second move is labeled by $(k,l)$.

\vspace{.5cm}

{\em Case 1:}

We begin with $\{ i,j,\sigma(i),\sigma(j)\} \cap \{ k,l \} = \emptyset$.

Assume for the moment that $k > j$. Then the covering transformations $(k,l)$ and $(i,j)$ 
are independent of each other.
Therefore, we assume that $i<k<j$. There are four cases; $\sigma(k) < j$,
$j<\sigma(k)<\sigma(i)$, $\sigma(i)<\sigma(k)<\sigma(j)$ or $\sigma(k)>\sigma(j)$.
In the first case $(k,\sigma(j))$ is an $ed$-rise for $\sigma$ with a label bigger than $(i,j)$.
This is a contradiction.
Similarly, in the second case, $(k,j)$ is an $ee$-rise for $\sigma$ with a bigger label than $(i,j)$.
The third case leads to a contradiction, because in that case $(i,j)$ is not a suitable rise in $\sigma$.
Finally, in the fourth case the two covering relations $(k,l)$ and $(i,j)$ are independent of each other.

Next we assume that $\{ i,j,\sigma(i),\sigma(j) \} \cap \{ k,l \} \neq \emptyset$.

We observe that if $k = \sigma(i)$, then we have $\tau(k) = \sigma \cdot (i,j) \cdot (\sigma(i), \sigma(j)) (k) = j$.
Then, we obtain $j < \sigma(i) = k < \tau(k) = j$, which is absurd.
Similarly, if $k= \sigma(j)$, then we have $\tau(k) = i$, and from $i < \sigma(j) = k < \tau(k) = i$ we obtain another contradiction.

Next observe that if $l = \sigma(i)$, then we have $\tau(l) =  \sigma \cdot (i,j) \cdot (\sigma(i), \sigma(j)) (l) = j$,
and from $j < \sigma(i) < \sigma(j) = l < \tau(l) = j$ we obtain a contradiction. Likewise, $l= \sigma(j)$ is impossible.

If $i=k$, then, of course we must have $j < l$. In this case we must also have that $\tau(k) = \sigma(j)$. In this case,
it is easy to check that $\tau(l) = \sigma(l)$, therefore, $(j,l)$ is an $ee$-rise for $\sigma$ which is bigger than $(i,j)$,
a contradiction.
If $j=k$, then we have $\tau (k) = \sigma(i)$. Just as in the previous case, $(i,l)$ is an $ee$-rise for $\sigma$.
Furthermore, $(i,l) > (i,j)$ gives the contradiction.
Finally, if $j=l$, then it is easy to check that $(k,j)$ is an $ee$-rise for $\sigma$, 
therefore, we have another contradiction, and this finishes the proof of the first case.

\vspace{.75cm}

{\em Case 2:}

We begin with the assumption that $\{ i,j,\sigma(i),\sigma(j)\} \cap \{ k,l \} = \emptyset$.

Then $k>i$. If $k>\sigma( j)$, then observe that $l > \tau(l) = \sigma(l) > \tau(k) = \sigma(k) > k > \sigma(j)$.
It follows that $(k,l)$ is an $ed$-rise for $\sigma$ with a bigger label than $(i,j)$, a contradiction.

We proceed with the assumption that  $i<k<\sigma(j)$.
If $\sigma(k) >j$, then the two moves are interchangeable. If $\sigma(k)$ is in between $i$ and $\sigma(j)$,
then $(k,\sigma(j))$ is an $ee$-rise for $\sigma$.

We proceed with the assumption that $\{ i,j,\sigma(i),\sigma(j)\} \cap \{ k,l \} \neq \emptyset$.

If $k=i$, then we have $j < l$. Since $\tau$ is obtained from $\sigma$ by applying the covering
transformation $(i,j)$, in this case we see that $\tau(k) = \sigma(j)$. Note also that
$\tau(l) = \sigma(l)$. Therefore, $\sigma(j) < \sigma(l)<l$.
If $\sigma(l) <j$, then $(\sigma(j),\sigma(l))$ is an $ee$-rise for $\sigma$ with a label
bigger than $(i,j)$, which is a contradiction.
Otherwise, $(\sigma(j),l)$ is an $ed$-rise for $\sigma$ with a label bigger than $(i,j)$, which is another contradiction.

If $k = j$, then since $\tau$ is obtained from $\sigma$ by the covering transformation of $(i,j)$,
$\tau(j)=\sigma(i)$.
But this is impossible, because $(k,l)$ is an $ed$-rise for $\tau$, and hence $k< \tau(k)$
which implies that $j =k< \sigma(i)$.

If $k= \sigma(i)$, then $\tau(k) = j$ hence $\sigma(j) < j < \tau(l) = \sigma(l)$.
Therefore, $(\sigma(j),l)$ is an $ed$-rise in $\sigma$.

If $k= \sigma(j)$, then $k < \tau(k) = i$, which is absurd.

If $l=j$, then we see that $(k,l)$ is an $ed$-rise for $\sigma$, which is a contradiction.

If $l = \sigma(i)$, then $l> \tau(l) = j > \sigma(i)=l$, which is absurd.

Finally, if $l = \sigma(j)$, then $\tau(k) = \sigma(k)$ and furthermore $\sigma(k)=\tau(k) < \tau(l)= i$.
Therefore, $(k,\sigma(i))$ is an $ed$-rise for $\sigma$, which is a contradiction.

\vspace{.75cm}

{\em Case 3:}

We begin with the assumption that $\{ i,j,\sigma(i),\sigma(j)\} \cap \{ k,l \} = \emptyset$, hence $k > i$.
If $k>j$, then the order of the covering transformations is interchangeable leading to a contradiction.
Therefore we assume that $i<k<j$.
If $\sigma(k)>\sigma(j)$, then once again in this case the two moves are interchangeable.
On the other hand, if $\sigma(i)<\sigma(k)<\sigma(j)$, then $(i,j)$ is not a suitable rise for $\sigma$,
which is a contradiction.

If $\sigma(k) < \sigma(i)$, then we consider two cases; $\sigma(k) > j$ and $\sigma(k) < j$.
In the former case, either the two moves are interchangeable,
or $(k,j)$ is an $ee$-rise for $\sigma$ with a bigger label than $(i,j)$, hence a contradiction.

In the latter case, we have $i< \sigma(k) < j$. In this case, if $l < \sigma(i)$, then the two moves are interchangeable.
If $\sigma(i) < l < \sigma(j)$, then either $\sigma(l)$ is in between $i$ and $j$ or $\sigma(l)$ is greater than $j$.
In the former case, $(i,j)$ is not a suitable rise. If $\sigma(l) > j$, then $(k,l)$ is not a suitable rise for $\tau$,
because in this case $\sigma(k) < \tau(j) = \sigma(i) < \sigma(l)$.
Now, if $\sigma(j) < l$, then we have two possibilities again; either $\sigma(l) > j$ or $\sigma(l)<j$.
In the former case, $(k,l)$ is not a suitable rise for $\tau$. In the latter case, the two moves are interchangeable.

We proceed with the assumption that $\{ i,j,\sigma(i),\sigma(j)\} \cap \{ k,l \} \neq \emptyset$.

If $k=i$ then $(j,l)$ is an $ed$-rise for $\sigma$. Indeed, in this case, $\tau(l) = \sigma(l)$ and we have
the inequalities $j < \sigma(j)=\tau(k) < \tau(l) = \sigma(l) < l$.

If $k=j$ then either $\sigma(l)<\sigma(j)$, or $\sigma(l)>\sigma(j)$. In the former case, we see that
$(i,l)$ is an $ed$-rise for $\sigma$. In the latter case $(j,l)$ is an $ed$-rise for $\sigma$.

If $k= \sigma(i)$, then $k < \tau(k) = \sigma \cdot (i,j) \cdot (\sigma(i), \sigma(j)) (k) = j$.
Since $j < \sigma(i)$, this is a contradiction. Similarly, if $k=\sigma(j)$, then
$k < \tau(k) = \sigma \cdot (i,j) \cdot (\sigma(i), \sigma(j)) (k) = i$. Since $i < \sigma(i)$, this is a contradiction, also.

If $l=j$, then we obtain a contradiction to the facts that $(k,l)$ is an $ed$-rise, and $(i,j)$ is an $ee$-rise.

If $l = \sigma(i)$, then $(k,\sigma(j))$ is an $ed$-rise for $\sigma$, because $k  < \sigma(k) = \tau(k) < \tau(l)= j < \sigma(j)$.

If $l = \sigma(j)$, then $(k,\sigma(i))$ is an $ed$-rise for $\sigma$, because $k  < \sigma(k) = \tau(k) < \tau(l)= i< \sigma(i)$.

\vspace{.75cm}

{\em Case 4:}

We proceed with the assumption that $\{ i,j,\sigma(i),\sigma(j)\} \cap \{ k,l \} = \emptyset$.

Once again, $k>i$. If $k>\sigma(j)$ then the two moves are interchangeable.
Therefore we assume that $i<k<\sigma(j)$.

If $\sigma(k)<\sigma(j)$, then $(k,j)$ is an $ed$-rise for $\sigma$.

If $\sigma(j)<\sigma(k)<j$ then $(k,\sigma(j))$ is an $ee$-rise for $\sigma$.

If $j< \sigma(k)$, then it is easy to check that the two moves are interchangeable.

We proceed with the case that $\{ i,j,\sigma(i),\sigma(j)\} \cap \{ k,l \} \neq \emptyset$.

If $k=i$, then $j < l$. Since $(k,l)$ is an $ee$-rise for $\tau$, we see that $l < \tau(k) = \sigma(j)$,
hence $j < \sigma(j)$. But $(i,j)$ is an $ee$-rise for $\sigma$, hence $j > \sigma(j)$; a contradiction.

If $k=j$, then $i < k < \tau(k) = \sigma \cdot (i,j) \cdot (\sigma(i), \sigma(j)) (k)= \sigma(i)$; a contradiction.

If $k=\sigma(i)$ then either $l>\sigma(j)$, which implies that $(\sigma(j),l)$ is an $ee$-rise for $\sigma$, or
$k<l<\sigma(j)$, which implies that $(i,\sigma(l))$ is an $ed$-rise for $\sigma$ and because $\sigma(l) = \tau(l)$,
the label $(i,\sigma(l))$ is bigger than the label $(i,j)$, hence a contradiction.

If $k=\sigma(j)$, then $i<k< \tau(k) = i$, a contradiction.

If $l=j$, then $i < l <\tau(l) = i$, a contradiction.

Similarly, the case $l=i$ is impossible.

If $l=\sigma(i)$, then we have either $\sigma(k) < \sigma(j)$, or $\sigma(j) < \sigma(k)$.

In the first case, if $\sigma(i) < \sigma(k) <\sigma(j)$, then it is easy to check that $i < k < j$, hence
$(i,j)$ is not a suitable rise. On the other hand, if $\sigma(k) < \sigma(i)$,
we have a contradiction to $\sigma(i)=l < \tau(k)=\sigma(k)$.
We proceed with the case $\sigma(k) > \sigma(j)$, then $(k,\sigma(j))$ is an $ee$-rise for $\sigma$.

If $l = \sigma(j)$, then $\tau(l) = i$ and $i< \sigma(j)$ which is a contradiction.

\vspace{.25cm}

Our next step is to prove that no other chain is lexicographically increasing. Recall that 
our increasing chains are decreasing chains in Incitti's original labeling. 
Since in Incitti's labeling every interval has exactly one decreasing chain the proof is complete.

\end{proof}

\section{Deodhar-Srinivasan poset vs. $(F_{2n},\leq)$}
\label{S:comparison}

As it is mentioned in the introduction, the posets $(\tilde{F}_{2n},\leq_{DS})$ and $(F_{2n},\leq)$ are different.
Indeed, for $2n=6$ the Hasse diagrams of these two posets differ by an edge.

In this section we show that $\tilde{F}_{2n}$ is a subposet of $F_{2n}$.
We proceed by recalling the definition of the length function of $\tilde{F}_{2n}$ as defined in \cite{DeodharSrinivasan}.

Let $[i_1,j_1]\cdots [i_n,j_n]$ be an element from $\tilde{F}_{2n}$, and let $x\in F_{2n}$ denote the corresponding
fixed-point-free involution.
The arc-diagram of $x\in F_{2n}$ is defined as follows. We place the numbers 1 to $2n$ on a horizontal line.
We connect the numbers $i$ and $j$ by a concave-down arc, if $j = x(i)$. Let $c(x)$ denote the number of intersection points of
all arcs.

The length function $\ell_{\tilde{F}_{2n}}$ of $\tilde{F}_{2n}$ is given by
$
\ell_{\tilde{F}_{2n}} ([i_1,j_1]\cdots [i_n,j_n])= \sum_{t=1}^n\left(j_t-i_t-1\right)-c(\pi)\,.
$
See Theorem 1.3 in \cite{DeodharSrinivasan}.

Our first observation is that $\ell_{\tilde{F}_{2n}}$ is in fact an inversion number.
To this end, for $x$ as above, let us define the {\em modified inversion number} of $x$ to be the number of inversions
in the word $i_1 j_1 i_2 j_2 \cdots i_n j_n$, and denote it by $\widetilde{inv}(x)$.
Note that $i_1$ is always 1 for fixed-point-free involutions.

\begin{Proposition}\label{length_functions_equal}
Let $[i_1,j_1]\cdots [i_n,j_n]\in \tilde{F}_{2n}$, and let $x\in F_{2n}$ be the corresponding fixed-point-free involution.
Then
$$
\widetilde{inv}(x) = \ell_{\tilde{F}_{2n}} ([i_1,j_1]\cdots [i_n,j_n]).
$$
\end{Proposition}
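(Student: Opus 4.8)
The plan is to write both sides as sums over pairs of arcs in the arc-diagram of $x$ and to match these contributions pair by pair. First I would observe that the letters of the word $i_1 j_1 i_2 j_2 \cdots i_n j_n$ are exactly the elements of $[2n]$, each occurring once, since $x$ is a fixed-point-free involution (every number is either some $i_t$, where $x$ sends it up, or some $j_t$, where $x$ sends it down). Thus $\widetilde{inv}(x)$ is the inversion number of a genuine permutation of $[2n]$. I would then record the two elementary constraints coming from the normalization: $i_1 < i_2 < \cdots < i_n$ and $i_t < j_t$ for every $t$, so that for any $s<t$ the value $i_s$ is the \emph{smallest} of the four numbers $i_s, j_s, i_t, j_t$.

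Next I would sort the inversions of the word according to the two arcs involved. Since the $i$-letters occur in increasing order of both position and value, they never invert among themselves, and the two letters $i_t, j_t$ of a single arc are already in increasing order; hence every inversion comes from a pair of distinct arcs $s<t$, and only the three comparisons $i_s > j_t$, $j_s > i_t$, $j_s > j_t$ can produce one. Using minimality of $i_s$, each such pair falls into exactly one of three configurations: \emph{crossing} $i_s < i_t < j_s < j_t$, \emph{nesting} $i_s < i_t < j_t < j_s$, or \emph{disjoint} $i_s < j_s < i_t < j_t$. A direct check of the three comparisons shows that a crossing pair contributes exactly one inversion, a nesting pair exactly two, and a disjoint pair none. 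Writing $c(x)$ for the number of crossing pairs (which is precisely the number of intersection points of the arc-diagram) and $N(x)$ for the number of nesting pairs, this yields $\widetilde{inv}(x) = c(x) + 2N(x)$.

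Separately I would evaluate $\sum_{t=1}^n (j_t - i_t - 1)$ by reading $j_t - i_t - 1$ as the number of integers strictly between the endpoints of the $t$-th arc, equivalently the number of endpoints of other arcs lying strictly under arc $t$. Summing over $t$ therefore counts, for each unordered pair of arcs, the total number of endpoints of one arc lying strictly inside the other; a short inspection of the same three configurations gives $2$ for a crossing pair, $2$ for a nesting pair, and $0$ for a disjoint pair. Hence $\sum_{t=1}^n (j_t - i_t - 1) = 2c(x) + 2N(x)$, and subtracting $c(x)$ gives $\ell_{\tilde{F}_{2n}} = 2c(x) + 2N(x) - c(x) = c(x) + 2N(x) = \widetilde{inv}(x)$, as claimed.

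The computation is entirely elementary; the only point requiring care is the bookkeeping in the two case analyses, namely verifying that the three configurations are exhaustive (which is exactly where minimality of $i_s$ is used) and that the interior-point count correctly handles each pair once. In fact the argument can be compressed into a single pass: for every pair $s<t$ the contribution to $\widetilde{inv}(x) + c(x)$ and the contribution to $\sum_t(j_t - i_t - 1)$ are equal, both being $2$ when the two arcs overlap (i.e.\ $i_t < j_s$) and $0$ when they are disjoint, which establishes $\widetilde{inv}(x) + c(x) = \sum_t (j_t - i_t - 1)$ directly.
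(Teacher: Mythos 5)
Your proof is correct and follows essentially the same route as the paper's: both compute $\widetilde{inv}(x)$ directly on the arc diagram and match it against $\sum_t (j_t-i_t-1)-c(x)$, using the normalization $i_1<\cdots<i_n$, $i_t<j_t$ to control the case analysis. The only difference is bookkeeping --- the paper fixes an arc $t$ and counts the inversions of the form $(j_t,\ast)$ by running over the integers strictly between $i_t$ and $j_t$ (subtracting the crossings with arcs attached to the left of $i_t$), whereas you decompose both sides symmetrically over unordered pairs of arcs (crossing/nesting/disjoint), which as a small bonus records the intermediate identities $\widetilde{inv}(x)=c(x)+2N(x)$ and $\sum_t(j_t-i_t-1)=2c(x)+2N(x)$.
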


\begin{proof}

An inversion in the word $i_1j_1i_2j_2\cdots i_nj_n$ is either the pair $(j_p,i_q)$, or the pair $(j_p,j_q)$, where
$p<q$ and $j_p>i_q$, or $j_p>j_q$, respectively.

We count inversions in another way.
If $(i_t,j_t)$ is a transposition that appears in $[i_1,j_1]\cdots [i_n,j_n]$ of $x$, 
then $j_t-i_t-1=\#\{m\,:\,m\in\mathbb N\,,\,i_t<m<j_t\}$.
On the other hand, each number $m\in \{i_t+1,\dots,j_t-1\}$ appears as an entry 
in another transposition of $[i_1,j_1]\cdots [i_n,j_n]$.

There are three possible cases: (1) the number $m$ is involved in the transposition 
$(a,m)$, where $a<i_t<m$; (2) the number $m$ is involved in the transposition $(a,m)$, 
where $i_t<a<m$; (3) the number $m$ is involved in the transposition $(m,b)$, where $m<b$.

In the first case the pair $(j_t,m)$ is not an inversion.
Notice that when $a<i_t$, the arc corresponding to the transposition $(a,m)$ crosses the arc
corresponding to the transposition $(i_t,j_t)$. In cases 2 and 3, we have the inversion pair $(j_t,m)$ always.
For Case 3, whether $b$ is greater than $j_t$ or not is unimportant.
So, to get the number of inversion pairs $(j_t,*)$ we have to subtract from
$j_t-i_t-1$ the number of intersections of the arc $(i_t,j_t)$ with the arcs $(a,m)$, where $a<i_t<m<j_t$.
Counting the inversions by summing up the contributions of all the transpositions $(i_t,j_t)$ proves our statement.
\end{proof}

Let us illustrate our proof by an example.
\begin{Example}
Take $x =(1,6)(2,5)(3,8)(4,7)\in S_8$.
$$
\xygraph{
!{<0cm,0cm>;<1cm,0cm>:<0cm,1cm>::}
!{(1,1) }*+{1}="1"
!{(2,1) }*+{2}="2"
!{(3,1) }*+{3}="3"
!{(4,1) }*+{4}="4"
!{(5,1) }*+{5}="5"
!{(6,1) }*+{6}="6"
!{(7,1) }*+{7}="7"
!{(8,1) }*+{8}="8"
"1"-@/^0.5cm/"6"  "2"-@/^0.3cm/"5" "3"-@/^0.5cm/"8" "4"-@/^0.3cm/"7"}
$$

Start with the transposition $(1,6)$. The numbers between 1 and 6 are 2,3,4,5.
All the pairs $(6,2)$, $(6,3)$, $(6,4)$, $(6,5)$ are inversions of the word $16253847$: 2,3,4 are
involved in transpositions of the form $(m,*)$ which is case (3) in our proof and always gives an inversion,
5 is involved in transposition $(2,5)$, it is case (2) since $1<2$, so it also gives an inversion.
Now take the transposition $(2,5)$. Both of the numbers 3,4 which are between 2 and 5 are involved in transpositions
of case (3), $(3,8)$ and $(4,7)$ and so both of them give inversions $(5,3)$ and $(5,4)$.
Now consider the transposition $(3,8)$. The pair $(8,4)$ is an inversion, 
it is case (3) since 4 is involved in the transposition $(4,7)$.
The pair $(8,7)$ also is an inversion since 7 is involved in the transposition $(4,7)$ and $3<4$, which belongs to case (2).
But the pairs $(8,5)$ and $(8,6)$ are not inversions since 5 and 6 are involved in transpositions $(2,5)$ and $(1,6)$,
where $1<3$ and $2<3$ and so both of them are of case (1). By the same reason when we consider the last transposition of
$x$ which is $(4,7)$, the pairs $(7,5)$ and $(7,6)$ are not inversions, they belong to case (1).
So, summing up, we have four inversions of the form $(6,*)$ contributed by the transposition $(1,6)$,
two inversions of the form $(5,*)$ contributed by the transposition $(2,5)$ and two inversions of the form $(8,*)$
contributed by the transposition $(3,8)$. Thus, $\widetilde{inv} (x)=4+2+2=8$. From the arc diagram depicted above
we see that $c(x)=4$. Hence,
$$
\ell_{\tilde{F}_{2n}}(x)=(6-1-1)+(5-2-1)+(8-3-1)+(7-4-1)-4=4+2+4+2-4=8.
$$
So, we see that $\widetilde{inv} (x)= \ell_{\tilde{F}_{2n}}(x)$ as it is expected.
\end{Example}

\begin{Corollary}
The length functions of $(F_{2n},\leq)$ and $(\tilde{F}_{2n},\leq_{DS})$ are the same.
\end{Corollary}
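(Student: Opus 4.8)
The plan is to deduce the Corollary from Proposition~\ref{length_functions_equal} together with Theorem~\ref{MainTheorem}. Proposition~\ref{length_functions_equal} already identifies $\ell_{\tilde{F}_{2n}}$ with the modified inversion number $\widetilde{inv}$, so it suffices to prove that the Bruhat length function $\ell_{F_{2n}}$ is \emph{also} equal to $\widetilde{inv}$. Since Theorem~\ref{MainTheorem} shows that $F_{2n}$ is $EL$-shellable, and in particular graded, its length function is the unique integer-valued function on $F_{2n}$ that vanishes at the minimum $\hat 0 = j_{2n}=[1,2][3,4]\cdots[2n-1,2n]$ and increases by exactly one whenever one passes to a covering element. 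Thus I only have to verify that $\widetilde{inv}$ enjoys these two properties.

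For the base case, the arc diagram of $j_{2n}$ consists of the pairwise non-crossing short arcs $(1,2),(3,4),\dots,(2n-1,2n)$, and its defining word $1\,2\,3\cdots(2n)$ is increasing, so $\widetilde{inv}(j_{2n})=0$, as required. For the covering property I would work with the expression
\[
\widetilde{inv}(x)=\sum_{t=1}^{n}\bigl(j_t-i_t-1\bigr)-c(x),
\]
which is valid by Proposition~\ref{length_functions_equal} together with the length formula of~\cite{DeodharSrinivasan}, and which has the advantage of being manifestly independent of the chosen ordering of the transpositions. The covering relations of $F_{2n}$ are exactly the two moves recalled in Figure~\ref{Fs1} and Figure~\ref{Fs2}, each of which rewires only the four points $i,j,\sigma(i),\sigma(j)$ while leaving all remaining arcs in place. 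I would compute the change in $\widetilde{inv}$ across each move by splitting $c(x)$ into the crossing among the two modified arcs (the \emph{internal} contribution) and the crossings of the modified arcs with every other arc (the \emph{external} contribution). For the non-crossing $ee$-rise one finds $\sum_t(j_t-i_t-1)$ unchanged while the internal crossing number drops by one, giving $+1$; for the $ed$-rise the sum increases by $2(\sigma(j)-\sigma(i))$ and the internal crossing number increases by one, and these are offset by a matching increase in the external crossings to again yield a net change of $+1$.

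The heart of the argument, and the step I expect to be the main obstacle, is the control of the external crossings. Partitioning the line by the four special points into regions, a third arc contributes a change to $c(x)$ only when its two endpoints fall into a particular pair of regions; every such ``bad'' configuration turns out to be exactly one in which some index $k$ witnesses a violation of the freeness of the rise $(i,j)$. For the $ee$-rise this is the single configuration with an endpoint in $(i_1,i_2)$ and its partner in $(\sigma(i_1),\sigma(i_2))$, while for the $ed$-rise one must test each endpoint of a candidate arc separately against the free condition (the offending configurations are those linking the region $(\sigma(i),\sigma(j))$ to $(i,\sigma(i))$ or to $(\sigma(j),j)$). Since the covering transformations are applied only to \emph{suitable}, hence free, rises, all bad configurations are excluded, which forces the external contribution to take precisely the value needed to make the total change $+1$. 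Having established both properties, uniqueness of the length function on the graded poset $F_{2n}$ gives $\ell_{F_{2n}}=\widetilde{inv}=\ell_{\tilde{F}_{2n}}$, which is the assertion of the Corollary.
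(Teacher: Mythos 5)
Your argument is correct, but it takes a genuinely different route from the paper. The paper's proof is a two-line citation: Proposition~\ref{length_functions_equal} gives $\ell_{\tilde{F}_{2n}}=\widetilde{inv}$, and the identity $\ell_{F_{2n}}=\widetilde{inv}$ is imported wholesale from Proposition~6.2 of \cite{Cherniavsky11}. You instead re-prove that second identity from scratch, by checking that $\widetilde{inv}=\sum_t(j_t-i_t-1)-c(x)$ vanishes at $j_{2n}$ and increases by exactly one across each of the two covering moves of Figures~\ref{Fs1} and~\ref{Fs2}; I verified the region-by-region bookkeeping and it works. For the non-crossing $ee$-rise the only configuration that would perturb the external crossing count is an arc joining $(i_1,i_2)$ to $(\sigma(i_1),\sigma(i_2))$, which is precisely a freeness violation; for the $ed$-rise, freeness forces every point of $(\sigma(i),\sigma(j))$ to be matched outside $[i,j]$, so the external crossings increase by exactly $2(\sigma(j)-\sigma(i)-1)$, offsetting the growth of $\sum_t(j_t-i_t-1)$ up to the required $+1$. (One small addition to your list of excluded configurations: arcs with \emph{both} endpoints in $(\sigma(i),\sigma(j))$ must also be ruled out by freeness, and they are.) Your approach buys a self-contained proof at the cost of length, whereas the paper leans on an external reference. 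Two cautions: first, do not derive gradedness of $F_{2n}$ from Theorem~\ref{MainTheorem} --- $EL$-shellability as defined here \emph{presupposes} a graded poset, so that inference is circular; gradedness should instead be taken from the fact (via Theorem~4.6 of \cite{H}, as the paper does) that $F_{2n}$ is a full-rank subposet of the interval $[j_{2n},w_0]$ in $I_{2n}$. Second, your computation implicitly uses that the covering relations of $F_{2n}$ are exactly the non-crossing $ee$- and $ed$-moves, which again rests on the saturated-chain result from \cite{H} rather than on the shellability theorem.
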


\begin{proof}
This follows from Proposition~\ref{length_functions_equal} above combined with Proposition 6.2 of \cite{Cherniavsky11}.
\end{proof}

Recall that $y\rightarrow x=[a_1,b_1]\cdots [a_n,b_n]$ in $\tilde{F}_{2n}$, 
if $\ell_{\tilde{F}_{2n}} (y) = \ell_{\tilde{F}_{2n}} (x) +1$
and there exists $1 \leq i<j \leq n$ such that
\begin{enumerate}
\item $y$ is obtained from $x$ by interchanging $b_i$ and $a_j$, where $b_i<a_j$, or
\item $y$ is obtained from $x$ by interchanging $b_i$ and $b_j$,where $b_i < b_j$.
\end{enumerate}
We call these interchanges {\em type 1} and {\em type 2}, respectively. 
Note that, in a type 1 covering relation we have the inequalities
$a_i < b_i < a_j <b_j$.
The inequalities of type 2 are $a_i < a_j< b_i<b_j$.

Note also that an arbitrary interchange of the entries in $x$
does not always result in another element of $\tilde{F}_{2n}$. This is because of the ordering of the $a_i$'s.
For example, as it is seen from Figure 3 of \cite{DeodharSrinivasan}, there is no edge between
the elements $[1,2][3,6][4,5]$ and $[1,4][2,5][3,6]$. On the other hand, it is easy to check that
the corresponding involution $x=(1,2)(3,6)(4,5)$ is covered by $y=(1,4)(2,5)(3,6)$ in $F_{2n}$.

\begin{Theorem}
The covering relations of the poset $\tilde{F}_{2n}$ are among the covering relations of $F_{2n}$.
\end{Theorem}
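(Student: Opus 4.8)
The plan is to reduce the statement to two facts about a covering relation $y \rightarrow x$ in $(\tilde{F}_{2n}, \leq_{DS})$: that (i) $x < y$ holds in the Bruhat--Chevalley order on $F_{2n}$, and (ii) $\ell_{F_{2n}}(y) = \ell_{F_{2n}}(x) + 1$. Granting these, since $F_{2n}$ is graded (it is $EL$-shellable by Theorem~\ref{MainTheorem}), any $z$ with $x < z < y$ would satisfy $\ell_{F_{2n}}(x) < \ell_{F_{2n}}(z) < \ell_{F_{2n}}(y)$, which is impossible; hence $y$ covers $x$ in $F_{2n}$, as required. Fact~(ii) is immediate: a type~1 or type~2 covering of $\tilde{F}_{2n}$ satisfies $\ell_{\tilde{F}_{2n}}(y) = \ell_{\tilde{F}_{2n}}(x) + 1$ by definition, and by the preceding Corollary the length functions of $(\tilde{F}_{2n}, \leq_{DS})$ and $(F_{2n}, \leq)$ coincide. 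So everything collapses to establishing the comparability in Fact~(i).

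For comparability I would invoke the rank-control matrix criterion for $F_{2n}$ (see~\cite{Cherniavsky11}), under which $x \leq y$ is equivalent to the entrywise inequality $Rk(y) \leq Rk(x)$. Writing $x = [a_1,b_1]\cdots[a_n,b_n]$, a type~2 move replaces the matches $\{a_i,b_i\}, \{a_j,b_j\}$, with $a_i < a_j < b_i < b_j$, by $\{a_i,b_j\}, \{a_j,b_i\}$, while a type~1 move replaces $\{a_i,b_i\}, \{a_j,b_j\}$, with $a_i < b_i < a_j < b_j$, by $\{a_i,a_j\}, \{b_i,b_j\}$. In either case $x$ and $y$ have identical matrices outside the rows and columns indexed by $\{a_i,a_j,b_i,b_j\}$. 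Using that, for the matrix of an involution, the entry $(Rk(x))_{p,q}$ equals $\#\{s \leq p :\ x(s) \leq q\}$, i.e.\ the number of $1$'s in the upper-left $p\times q$ block, the difference $(Rk(x))_{p,q} - (Rk(y))_{p,q}$ becomes a fixed sum of eight Iverson-bracket products, four from the changed $1$'s of $x$ and four from those of $y$.

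The key step is to check that this eight-term difference is $\geq 0$ for every $(p,q)$. Fixing the thresholds $p$ and $q$ and writing the indicators $[a_i \leq p], [a_j \leq p], [b_i \leq p], [b_j \leq p]$ (each equal to $1$ or $0$ according as the inequality holds, and similarly for $q$), a direct regrouping factors the difference into a sum of two products of pairwise bracket-differences. For a type~2 move it equals $\big([a_i \leq p] - [a_j \leq p]\big)\big([b_i \leq q] - [b_j \leq q]\big) + \big([b_i \leq p] - [b_j \leq p]\big)\big([a_i \leq q] - [a_j \leq q]\big)$, whose four brackets are nonnegative because $a_i < a_j$ and $b_i < b_j$; for a type~1 move it factors analogously, with the two governing inequalities being $b_i < a_j$ and $a_i < b_j$. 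In both cases every product is nonnegative, so the whole expression is $\geq 0$. This yields $Rk(y) \leq Rk(x)$, hence $x \leq y$, and since $x \neq y$ we conclude $x < y$.

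I expect the only genuine labor to be this sign bookkeeping, which the factorization renders routine; the conceptual engine is entirely the equality of the two length functions together with the gradedness of $F_{2n}$. One point worth emphasizing is that I never need the Incitti ``suitability'' (freeness) of the underlying rise: the rank-control inequality delivers comparability $x < y$ unconditionally, and gradedness then promotes the rank-one gap to an honest covering relation, so no separate verification that a $\leq_{DS}$-move realizes a suitable covering transformation is required.
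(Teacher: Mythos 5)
Your proof is correct, but it follows a genuinely different route from the one in the paper. The paper's argument is a one-line pattern match: a type~1 interchange moves the four affected $1$'s of the permutation matrix exactly as Incitti's $ed$-rise covering transformation does, a type~2 interchange matches the (non-crossing) $ee$-rise, and the claim is then read off from Incitti's characterization of the covering relations of $I_{2n}$ restricted to $F_{2n}$. You instead decompose the statement into comparability plus a length count: comparability $x<y$ is checked directly from the rank-control criterion via the eight-term Iverson-bracket computation (your factorization is correct in both cases; note that for a type~1 move one of the two products is a product of two \emph{nonpositive} bracket-differences, e.g.\ $\bigl([a_j\leq p]-[b_i\leq p]\bigr)\bigl([b_j\leq q]-[a_i\leq q]\bigr)$, which is still nonnegative --- your blanket phrase ``every product is nonnegative'' is true but ``every bracket is nonnegative'' would not be), and the cover is then forced by gradedness of $F_{2n}$ together with the preceding Corollary identifying $\ell_{F_{2n}}$ with $\ell_{\tilde{F}_{2n}}$. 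What your route buys is that it makes explicit a point the paper leaves implicit: for the pattern-match to invoke Incitti's theorem one must also know the rise is \emph{free} (suitable), and that freeness is most easily deduced from exactly the length-plus-comparability argument you give, so your version is the more self-contained and airtight of the two. What the paper's route buys is brevity and the structural byproduct that every $\leq_{DS}$-edge is realized by one of the two covering transformations underlying the $EL$-labeling of Theorem~\ref{MainTheorem}, which is the form of the statement the authors actually use. Both arguments are valid.
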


\begin{proof}

It suffices to note that a type 1 covering relation of $\tilde{F}_{2n}$ corresponds to an $ed$-rise, 
and a type 2 covering relation of $\tilde{F}_{2n}$ corresponds to an $ee$-rise in $F_{2n}$. 
This is almost obvious and straightforward.
\end{proof}

\section{The order complex of $F_{2n}$}
\label{S:ordercomplex}

Concerning the topology of the order complex
$\Delta(F_{2n})$ we have the following theorem.

\begin{Theorem}\label{triangulates}
The order complex $\Delta(F_{2n})$ triangulates a ball of dimension $n(n-1)-2$.
\end{Theorem}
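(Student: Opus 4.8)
The plan is to identify the homeomorphism type of $\Delta(\overline{F_{2n}})$ by combining the $EL$-shellability established in Theorem~\ref{MainTheorem} with the Danaraj--Klee criterion: a shellable pseudomanifold is a sphere when every codimension-one face lies in exactly two facets, and a ball when some codimension-one face lies in exactly one facet (the boundary being nonempty). First I would pin down the dimension through the length function. Since $j_{2n}=[1,2][3,4]\cdots[2n-1,2n]$ is the minimum of $F_{2n}$ with $\ell_{F_{2n}}(j_{2n})=0$ and $w_0=(1,2n)(2,2n-1)\cdots(n,n+1)$ is the maximum, a direct evaluation gives
\begin{align*}
\ell_{F_{2n}}(w_0)=\sum_{t=1}^{n}\bigl((2n+1-t)-t-1\bigr)-c(w_0)=\sum_{t=1}^{n}(2n-2t)-0=n^2-n,
\end{align*}
because the arcs of $w_0$ are nested and hence non-crossing. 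Thus $F_{2n}$ has length $n(n-1)$, every maximal chain in $\overline{F_{2n}}=F_{2n}-\{\hat 0,\hat 1\}$ has $n(n-1)-1$ elements, and $\Delta(\overline{F_{2n}})$ is pure of dimension $n(n-1)-2$. By Theorem~\ref{MainTheorem} together with Bj\"orner's theorem, this complex is shellable.

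Next I would verify the \emph{pseudomanifold} property: every open interval $(u,v)\subseteq F_{2n}$ of length $2$ contains at most two elements. In the order complex a ridge is a maximal chain with one interior vertex $x_i$ deleted, and the number of facets containing it equals the number of $z$ with $x_{i-1}<z<x_{i+1}$; so the pseudomanifold condition is precisely that each length-$2$ interval has at most two atoms. This is where the explicit covering transformations of $F_{2n}$ enter: since all coverings are $ee$-rises or $ed$-rises (Figures~\ref{Fs1} and \ref{Fs2}), a length-$2$ interval $[u,v]$ is resolved by analysing which pairs of such moves can carry $u$ to $v$, and showing at most two distinct middle involutions arise. This is the combinatorial heart of the argument and parallels Incitti's analysis of $I_{2n}$; one can either carry it out by a case check on the supports of the two moves, or invoke Hultman's structural results on the Bruhat order of these (twisted) involutions in \cite{H,Hultman05}, which already supply the thin-below-the-boundary structure.

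Finally, to conclude a \emph{ball} rather than a sphere, I would exhibit a ridge lying in exactly one facet, i.e.\ a length-$2$ interval of $F_{2n}$ with a single interior element; equivalently, a rank-$2$ interval that is not a ``diamond.'' (Such intervals must exist, since $F_{2n}$ is not Eulerian---this is exactly the phenomenon that distinguishes these posets from full Bruhat order and underlies Hultman's Conjecture~6.3.) A convenient small witness can be read off from the Hasse diagram in Figure~\ref{fig:SL6/Sp6}. With the boundary nonempty, Danaraj--Klee yields that the shellable pseudomanifold $\Delta(\overline{F_{2n}})$ is homeomorphic to a ball of dimension $n(n-1)-2$, as claimed; shellability automatically guarantees the boundary is itself a shellable sphere, so no separate boundary argument is needed. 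The main obstacle I anticipate is the uniform verification of the at-most-two property for length-$2$ intervals across all configurations of the two move types; the dimension count, the shellability input, and the ball-versus-sphere dichotomy are then routine.
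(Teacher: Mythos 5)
Your proposal is correct and follows essentially the same route as the paper: Danaraj--Klee applied to the shellable, pure complex of dimension $\ell(F_{2n})-2=n(n-1)-2$, with the pseudomanifold condition and the existence of a boundary ridge supplied by the poset structure. The ``combinatorial heart'' you anticipate as the main obstacle is in fact bypassed exactly as in your alternative suggestion: the paper observes that $F_{2n}$ is a \emph{proper, full-rank} subposet of the Eulerian interval $[j_{2n},w_0]\subseteq I_{2n}$ (Hultman's Theorem~4.6 plus Incitti's Eulerian-ness of $I_{2n}$), so every length-$2$ interval of $F_{2n}$ sits inside a diamond of $I_{2n}$ and hence has at most two middle elements, while properness forces some interval to have only one, yielding a ball rather than a sphere.
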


\begin{proof}
We know from \cite{DK74} that if in a pure shellable complex $\Delta$ each $\dim \Delta -1$ dimensional
face lies in at most two maximal faces, then $\Delta$ triangulates a sphere or a ball. 
If there is some codimension one face which is contained in only one maximal face, then the complex is a ball. 

The poset $F_{2n}$ is a shellable (follows from our main result) proper full rank subposet 
(known from Theorem~4.6 of~\cite{H}) of an Eulerian poset which is the interval 
$[j_{2n}, w_0]\subset I_{2n}$ (where $j_{2n}=(1,2)(3,4)\cdots(2n-1,2n)$ and $w_0$ is 
the longest permutation $(1,2n)(2,2n-1)\cdots$ and $I_{2n}$ is Eulerian by~\cite{Incitti04}), 
thus the order complex of the proper part of $F_{2n}$ is a ball of dimension
$$
\dim \Delta(F_{2n}) = \ell (F_{2n}) -2= 2{ n \choose 2}  - 2 = n(n-1)-2.
$$
\end{proof}

\section{Final Remarks}
\label{son}

In Section \ref{S:comparison} we show that the length functions of $F_{2n}$ and $\tilde{F}_{2n}$ are the same.
As a corollary we see that
\begin{Corollary}
For $m\in \N$, let $[m]_q$ denote its $q$-analogue $1+q+\cdots + q^{m-1}$.
Then, the length-generating function $\sum_{x\in F_{2n}} q^{\ell_{F_{2n}}(x)}$ of $F_{2n}$ is equal to
$
[2n-1]_q !! := [2n-1]_q [2n-3]_q \cdots [3]_q [1]_q.
$
\end{Corollary}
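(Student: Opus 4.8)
The plan is to reduce the statement to the modified inversion statistic and then establish the product formula by a recursive decomposition of perfect matchings. First I would invoke Proposition~\ref{length_functions_equal} together with the Corollary immediately following it: the former gives $\widetilde{inv}(x)=\ell_{\tilde{F}_{2n}}(x)$ and the latter identifies $\ell_{\tilde{F}_{2n}}$ with $\ell_{F_{2n}}$, so that $\ell_{F_{2n}}(x)=\widetilde{inv}(x)$ for every $x\in F_{2n}$. It therefore suffices to prove
$$
\sum_{x\in F_{2n}} q^{\widetilde{inv}(x)} = [2n-1]_q!!,
$$
where, writing $x=[i_1,j_1]\cdots[i_n,j_n]$ with $i_1=1<i_2<\cdots<i_n$, the quantity $\widetilde{inv}(x)$ is the number of inversions of the word $w=i_1j_1i_2j_2\cdots i_nj_n$.

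Next I would peel off the arc through $1$ and argue by induction on $n$. Since $i_1=1$ is the smallest letter and occupies the first position, it creates no inversion. The letter $j_1=x(1)$ sits in the second position, and the letters to its right are exactly $\{2,\dots,2n\}\setminus\{x(1)\}$; of these, precisely $x(1)-2$ are smaller than $x(1)$, so the second position contributes exactly $x(1)-2$ inversions, a number depending only on $x(1)$. Deleting the first two letters leaves the canonical word of the fixed-point-free involution $\bar{x}$ induced on $\{2,\dots,2n\}\setminus\{x(1)\}$, and relabeling this $(2n-2)$-element set by the unique order isomorphism onto $[2n-2]$ preserves both the increasing order of the smaller endpoints and the inversion count. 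Hence $\widetilde{inv}(x)=(x(1)-2)+\widetilde{inv}(\bar{x})$.

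As $x(1)$ ranges over $\{2,\dots,2n\}$ and, independently, $\bar{x}$ ranges over $F_{2n-2}$, this yields the recursion
$$
\sum_{x\in F_{2n}} q^{\widetilde{inv}(x)} = \Big(\sum_{k=0}^{2n-2} q^{k}\Big)\sum_{y\in F_{2n-2}} q^{\widetilde{inv}(y)} = [2n-1]_q \sum_{y\in F_{2n-2}} q^{\widetilde{inv}(y)}.
$$
The base case $n=1$ gives $F_2=\{(1,2)\}$, word $12$, so $\widetilde{inv}=0$ and the generating function is $1=[1]_q$. Iterating the recursion then produces $[2n-1]_q[2n-3]_q\cdots[3]_q[1]_q=[2n-1]_q!!$, as claimed.

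The only delicate point, and thus the main obstacle such as it is, lies in the second step: one must check that the inversions split cleanly, namely that the contribution of the first arc is exactly $x(1)-2$ and is independent of the remaining pairs, and that the order-preserving relabeling carries the canonical word of $x$ restricted to the surviving letters onto the canonical word of $\bar{x}$ without altering its inversion count. Once this bookkeeping is in place, the inductive product formula is immediate.
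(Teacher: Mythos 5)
Your argument is correct, but it takes a genuinely different route from the paper. The paper disposes of this corollary in one line: having already established (via Proposition~\ref{length_functions_equal} and the corollary following it) that $\ell_{F_{2n}}=\ell_{\tilde{F}_{2n}}$, it simply cites Corollary~2.2 of Deodhar--Srinivasan, where the identity $\sum_x q^{\ell_{\tilde{F}_{2n}}(x)}=[2n-1]_q!!$ is already proved. You perform the same initial reduction to the statistic $\widetilde{inv}$, but then you prove the product formula from scratch by peeling off the arc through $1$: the first two letters of the canonical word contribute exactly $x(1)-2$ inversions, the order-isomorphic relabeling of the remaining $2n-2$ letters carries the truncated word to the canonical word of an element of $F_{2n-2}$ without changing its inversion count, and the resulting bijection $x\mapsto(x(1),\bar{x})$ splits the generating function as $[2n-1]_q$ times the one for $F_{2n-2}$. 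The bookkeeping you flag as the delicate point does check out: $i_1=1$ contributes nothing, the letters to the right of $j_1=x(1)$ that are smaller than it are precisely $\{2,\dots,x(1)-1\}$, and inversions are invariant under order-preserving relabeling. What your approach buys is a self-contained, citation-free proof of the $q$-double-factorial identity (essentially reproving the Deodhar--Srinivasan enumeration); what the paper's approach buys is brevity, at the cost of leaning on an external reference.
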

\begin{proof}
Follows from Corollary 2.2 of \cite{DeodharSrinivasan}.
\end{proof}
We should mention here that the conclusion of the above corollary is obtained by other combinatorial methods by A. Avni
in his M.Sc. thesis at Bar-Ilan University.

It turns out there is another simple characterization of $\ell_{F_{2n}}$, which seems to be known to the experts.
Although it is not difficult to prove, since we could not locate it in the literature, we record its proof here for
the sake of completeness.
\begin{Proposition}
Let $x\in S_{2n}$ be a fixed-point-free involution.
Then
$
\widetilde{inv}(x)= \ell_{F_{2n}}(x) = \ell_{\tilde{F}_{2n}}(x) = \frac{ inv(x) -n}{2},
$
where $inv(x)= | \{ (i,j):\ i<j\ \text{and}\ x(i)>x(j) \}|$.
\end{Proposition}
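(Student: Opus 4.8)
The first three equalities have in effect already been proved: $\widetilde{inv}(x)=\ell_{\tilde{F}_{2n}}(x)$ is Proposition~\ref{length_functions_equal}, and the equality $\ell_{F_{2n}}=\ell_{\tilde{F}_{2n}}$ was established in Section~\ref{S:comparison}. Thus the only new assertion is $\widetilde{inv}(x)=\tfrac{inv(x)-n}{2}$, equivalently
$$
inv(x)=2\,\widetilde{inv}(x)+n .
$$
The plan is to prove this last equation by splitting the inversions of the permutation $x$ into two groups and matching them against $\widetilde{inv}(x)$.

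Write $x=[i_1,j_1]\cdots[i_n,j_n]$ with $i_t<j_t$. First I would isolate the $n$ \emph{diagonal} inversions: for each transposition $(i_t,j_t)$ the pair $(i_t,j_t)$ is an inversion of $x$, since $i_t<j_t$ while $x(i_t)=j_t>i_t=x(j_t)$; these are exactly the inversions $(a,b)$ with $x(a)=b$, and there are precisely $n$ of them. To organize the remaining inversions I would introduce the map $\iota(a,b)=(x(b),x(a))$ on the inversion set of $x$. A one-line computation using $x=x^{-1}$ shows that $\iota$ sends inversions to inversions, that $\iota^2=\mathrm{id}$, and that its fixed points are exactly the $n$ diagonal inversions above. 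Consequently the non-diagonal inversions are partitioned into $\iota$-orbits of size two; in particular $inv(x)-n$ is even and equals twice the number of such two-element orbits.

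It then remains to show that the number of two-element $\iota$-orbits equals $\widetilde{inv}(x)$. Since $x$ is fixed-point-free, each non-diagonal orbit involves four distinct indices $a,b,x(a),x(b)$, i.e. exactly two distinct transpositions of $x$. So I would fix an unordered pair of transpositions $\{a,A\}$, $\{b,B\}$ with $a<A$, $b<B$, $a<b$ (so that they are blocks $p<q$ of the word, with $i_p=a$, $j_p=A$, $i_q=b$, $j_q=B$), and compare, on the one hand, the \emph{mixed} inversions of $x$ having one index in each transposition, and on the other hand the modified inversions contributed by this pair, which are precisely the pairs $(j_p,i_q)=(A,b)$ (present iff $A>b$) and $(j_p,j_q)=(A,B)$ (present iff $A>B$). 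Classifying the pair by its arc diagram gives three cases: disjoint ($a<A<b<B$) yields $0$ mixed inversions and $0$ modified inversions; crossing ($a<b<A<B$) yields $2$ mixed inversions and $1$ modified inversion; nesting ($a<b<B<A$) yields $4$ mixed inversions and $2$ modified inversions. In every case the number of mixed inversions is twice the number of modified inversions (indeed the two mixed inversions of a crossing pair, and the two pairs of mixed inversions of a nesting pair, are exactly the $\iota$-orbits), so summing over all pairs of transpositions gives (number of two-element orbits) $=\widetilde{inv}(x)$, and hence $inv(x)=n+2\,\widetilde{inv}(x)$, as desired.

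The routine but slightly fiddly step is the three-case verification of the last paragraph; the only real content there is keeping track of which of the four cross-pairs $(a,b),(a,B),(A,b),(A,B)$ are inversions of $x$ in each configuration, and I expect this bookkeeping — rather than any conceptual difficulty — to be the main point to get right.
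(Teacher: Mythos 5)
Your proof is correct, but it takes a genuinely different route from the paper's for the only new equality, $\ell_{\tilde F_{2n}}(x)=\tfrac{inv(x)-n}{2}$. The paper gets this indirectly from known structural facts: since $F_{2n}$ is a full-rank subposet of the interval $[j_{2n},w_0]$ in $I_{2n}$ whose bottom element has $I_{2n}$-length $n$, one has $\ell_{F_{2n}}(x)=\ell_{I_{2n}}(x)-n$, and then Incitti's formula $\ell_{I_{2n}}(x)=\tfrac{exc(x)+inv(x)}{2}$ together with $exc(x)=n$ finishes it in one line. You instead prove the equivalent identity $inv(x)=2\,\widetilde{inv}(x)+n$ directly: the involution $\iota(a,b)=(x(b),x(a))$ on the inversion set (which preserves inversions and has exactly the $n$ ``diagonal'' inversions $(i_t,j_t)$ as fixed points, since $x$ is fixed-point-free) pairs the remaining inversions within each unordered pair of transpositions, and the three-case check (disjoint/crossing/nesting) matches the two-element orbits with the inversions $(j_p,i_q)$, $(j_p,j_q)$ of the word $i_1j_1\cdots i_nj_n$; I verified the case counts ($0/0$, $2/1$, $4/2$) and they are right, as is the observation that only pairs of the form $(j_p,i_q)$ or $(j_p,j_q)$ can be inversions of the word because $i_1<\cdots<i_n$. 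What your approach buys is a self-contained, purely combinatorial proof that does not invoke Incitti's length formula on $I_{2n}$ or the full-rank-subposet result from Hultman, and it yields the parity of $inv(x)-n$ as a byproduct; what it costs is the case analysis, which the paper's two-line deduction avoids at the price of leaning on the cited machinery.
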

\begin{proof}
Let $x\in F_{2n}$.
The second equality is shown to be true in Section \ref{S:comparison}. The first equality follows from
Proposition 6.2 of \cite{Cherniavsky11}. It remains to show the third equality.

Since $F_{2n}$ is a full-rank subposet of a certain interval of $I_{2n}$, and the $I_{2n}$-length 
of the minimal element of $F_{2n}$ is $n$, we have  $\ell_{I_{2n}}(x)-\ell_{F_{2n}}(x)=n$.
On one hand, we know that $\ell_{I_{2n}}(x)=\frac{exc(x)+inv(x)}{2}$, where
$exc(x)=|\{i:i<x(i)\}|$ (see \cite{Incitti04}). On the other hand, if $x\in F_{2n}$, then $exc(x)=n$.
Therefore,
$\ell_{F_{2n}}(x)=\ell_{I_{2n}}(x)-n=\frac{n+inv(x)}{2}-n=\frac{inv(x)-n}{2}$.

\end{proof}



\bibliography{FFI.bib}
\bibliographystyle{plain}

\end{document}